\newtheorem{thm}{Theorem}[section]
\newtheorem{lem}[thm]{Lemma}
\theoremstyle{definition}
\numberwithin{equation}{section}
\begin{document}
\title[]{Characterization of $\mathrm{SL}(2,q)$ \\ by its non-commuting graph}%
\author{Alireza Abdollahi}
\thanks{Author's Address: Department of Mathematics, University of Isfahan, Isfahan 81746-73441, Iran; and  School of Mathematics, Institute for Research in Fundamental Sciences (IPM), P.O.Box: 19395-5746, Tehran, Iran. e-mail: {\tt
abdollahi@member.ams.org}}
\thanks{This research was in part supported by a grant from IPM (No. 87200118)}%
\subjclass{20D60}%
\keywords{Non-commuting graph; General linear group; Special linear group}%
\begin{abstract}
Let $G$ be a non-abelian group and $Z(G)$ be its center. The
non-commuting graph $\mathcal{A}_G$ of $G$ is the graph whose
vertex set is $G\backslash Z(G)$ and two vertices are joined by
an edge if they do not commute. Let  $\mathrm{SL}(2,q)$ be the
special linear group of degree 2 over the finite field of order
$q$. In this paper we prove that if $G$ is a group such that
$\mathcal{A}_G\cong \mathcal{A}_{\mathrm{SL}(2,q)}$ for some
prime power $q\geq 2$, then $G\cong \mathrm{SL}(2,q)$.
\end{abstract}
\maketitle
\section{\bf Introduction and Results}
Let $G$ be a non-abelian group and $Z(G)$ be its center. One can
associate with $G$ a graph whose vertex set is $G\backslash Z(G)$
 and two vertices are joined by an edge whenever they do not
commute. We call this graph the non-commuting graph of $G$ and it
will be denoted by $\mathcal{A}_G$. The non-commuting graph
$\mathcal{A}_G$ was first introduced by Paul Erd\"os \cite{N} to
formulate the following question: If every complete subgraph of
$\mathcal{A}_G$ is finite, is there a finite bound on the
cardinalities of complete subgraphs of $\mathcal{A}_G$? Neumann
\cite{N} answered positively Erd\"os question by proving that
$|G:Z(G)|=n$ is
finite and $n$ is obviously  the requested finite bound.\\
The non-commuting graph has been studied by many people (see e.g.,
\cite{AAM},\cite{MSZZ} and \cite{P}). It is proved in \cite{WS1}
(resp., in \cite{WS2}) that if $G$ is a finite group with
$\mathcal{A}_G\cong \mathcal{A}_{\mathrm{PSL}(2,q)}$ (resp.,
$\mathcal{A}_G\cong \mathcal{A}_{A_{10}}$), then $G\cong
\mathrm{PSL}(2,q)$ (resp., $G\cong A_{10}$). For any prime power
$q$, let $\mathrm{GL}(2,q)$ (resp. $\mathrm{SL}(2,q)$)  be the
general
 (resp. special)
linear group of degree 2 over the finite field of order $q$. In
this paper we study the groups  whose non-commuting graphs are
isomorphic to either $\mathrm{GL}(2,q)$ or $\mathrm{SL}(2,q)$.
Our main results are the following.
\begin{thm}\label{GL}
Let $G$ be a group such that $\mathcal{A}_G\cong
\mathcal{A}_{\mathrm{GL}(2,q)}$ for some prime power $q>3$. Then
$G/Z(G)\cong \mathrm{PGL}(2,q)$, $G'\cong\mathrm{SL}(2,q)$ and
$Z(G)$ is of order $q-1$. In particular,  if $q$ is even, then
$G=G' \times Z(G)$.
\end{thm}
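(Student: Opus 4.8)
The plan is first to reduce to the finite case and then to exploit the fact that $\mathrm{GL}(2,q)$ is an \emph{AC-group} (the centralizer of every non-central element is abelian). Since $\mathcal{A}_G$ has finitely many vertices, $G\setminus Z(G)$ is finite; as the map $z\mapsto xz$ embeds $Z(G)$ into $G\setminus Z(G)$ for any fixed non-central $x$, the centre is finite and hence $G$ is finite. Next I would record that in $\mathrm{GL}(2,q)$ the centralizer of a non-central element is a split torus (order $(q-1)^2$), a non-split torus (order $q^2-1$), or $Z\cdot U$ with $U$ unipotent (order $q(q-1)$), all abelian; thus $\mathrm{GL}(2,q)$ is an AC-group and ``commuting'' is an equivalence relation on its non-central elements, so $\mathcal{A}_{\mathrm{GL}(2,q)}$ is complete multipartite. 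Counting maximal abelian subgroups shows its parts have sizes $(q-1)(q-2)$, $(q-1)^2$, $q(q-1)$ with multiplicities $\tfrac{q(q+1)}{2}$, $q+1$, $\tfrac{q(q-1)}{2}$ respectively (so $q^2+q+1$ parts in all); for $q>3$ the three sizes are distinct.

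Because a complete multipartite graph is determined by its multiset of part sizes, $\mathcal{A}_G$ is complete multipartite with exactly the same parts, so ``commuting'' is an equivalence relation on $G\setminus Z(G)$ and $G$ is itself a finite AC-group whose maximal abelian subgroups $A_i=C_G(x)$ satisfy $A_i\cap A_j=Z(G)$ for $i\neq j$; equivalently $\overline G:=G/Z(G)$ is partitioned by the subgroups $\overline{A_i}=A_i/Z(G)$. Writing $z:=|Z(G)|$, each part has size $|A_i|-z$, so the centralizer orders of $G$ are $(q-1)(q-2)+z$, $(q-1)^2+z$, $q(q-1)+z$. Since $Z(G)\le A_i$ forces $z\mid|A_i|$, I obtain $z\mid(q-1)(q-2)$, $z\mid(q-1)^2$ and $z\mid q(q-1)$; as $\gcd\bigl((q-1)^2,q(q-1)\bigr)=q-1$, this yields $z\mid q-1$. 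Consequently the blocks $\overline{A_i}$ have orders $\tfrac{(q-1)(q-2)}{z}+1$, $\tfrac{(q-1)^2}{z}+1$, $\tfrac{q(q-1)}{z}+1$, with the multiplicities above.

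The main obstacle is to upgrade $z\mid q-1$ to $z=q-1$, i.e.\ to identify $\overline G$. For this I would invoke the Baer--Kegel--Suzuki classification of finite groups admitting a nontrivial partition. A group on that list whose partition has three distinct block sizes with multiplicities $\tfrac{q(q+1)}{2},\,q+1,\,\tfrac{q(q-1)}{2}$ must be $\mathrm{PGL}(2,q)$, whose canonical partition consists of $\tfrac{q(q+1)}{2}$ split tori of order $q-1$, $q+1$ Sylow $p$-subgroups of order $q$, and $\tfrac{q(q-1)}{2}$ non-split tori of order $q+1$: the Sylow multiplicity $q+1$ (matched to the block of size $(q-1)^2$) fixes the characteristic, and matching the block orders to $q-1,\,q,\,q+1$ then forces $z=q-1$, excluding $\mathrm{PSL}(2,q)$ (whose torus blocks would be $\tfrac{q\pm1}{2}$) as well as Frobenius groups, $p$-groups and Suzuki groups by the shape of the parts. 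Hence $\overline G\cong\mathrm{PGL}(2,q)$, $|Z(G)|=q-1$, and $|G|=(q-1)\,|\mathrm{PGL}(2,q)|=|\mathrm{GL}(2,q)|$; the hypothesis $q>3$ is used to keep the three sizes genuinely distinct.

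Finally I would pin down $G'$. From $\overline G\cong\mathrm{PGL}(2,q)$ and $(\overline G)'=\mathrm{PSL}(2,q)$ (simple, as $q>3$), the full preimage $G'Z(G)$ satisfies $G'Z(G)/Z(G)\cong\mathrm{PSL}(2,q)$, so $G'/(G'\cap Z(G))\cong\mathrm{PSL}(2,q)$ and $G'$ is a central extension of $\mathrm{PSL}(2,q)$. Using the now-known abelian centralizers of $G$ of orders $q^2-1$ and $q(q-1)$, I would determine $|G'\cap Z(G)|=\gcd(2,q-1)$ and verify that $G'$ is perfect; being then a perfect central extension of $\mathrm{PSL}(2,q)$ of order $q(q^2-1)$, and the Schur multiplier of $\mathrm{PSL}(2,q)$ having order $\gcd(2,q-1)$ for $q>3$, it follows that $G'\cong\mathrm{SL}(2,q)$. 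When $q$ is even, $\gcd(2,q-1)=1$ gives $G'\cap Z(G)=1$ and $|G'|\,|Z(G)|=|G|$, whence $G=G'\times Z(G)$. I expect the partition/classification identification of $\overline G$ and the verification that $G'$ is a \emph{perfect} extension of the correct order to be the two most delicate points.
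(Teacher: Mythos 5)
Your opening moves are correct and genuinely more self-contained than the paper's: where the paper quotes results from \cite{AAM} to get $|G|=|\mathrm{GL}(2,q)|$ and the $AC$-property, you recover the $AC$-property from the complete multipartite structure of the graph, and instead of knowing $|Z(G)|$ in advance you derive $z\mid q-1$ by divisibility, planning to force $z=q-1$ at the identification step. The first genuine gap is that identification step. The Baer--Kegel--Suzuki theorem only places $\overline G$ in a list containing $p$-groups with proper Hughes subgroup, Frobenius groups, groups of Hughes--Thompson type (which you never mention), $S_4$, $\mathrm{PSL}(2,r^m)$, $\mathrm{PGL}(2,r^m)$ and $\mathrm{Sz}(2^{2n+1})$; dismissing everything but $\mathrm{PSL}/\mathrm{PGL}$ ``by the shape of the parts'' is an assertion, not an argument, and these exclusions are exactly where the paper spends the bulk of its proof (nilpotency via a Sylow argument; the Frobenius and $S_4$ configurations via Theorem \ref{AC-sol}, using that $W_1$ has \emph{three} distinct values and that no two of $q-1,q,q+1$ are powers of a common prime). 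Moreover, even for the candidate $\overline G\cong\mathrm{PGL}(2,r^m)$, your multiplicity matching silently assumes that the partition $\{\overline{A_i}\}$ is the \emph{standard} one into Sylow subgroups and tori; partitions of a group need not be unique (for odd $q$, Klein four-subgroups of $\mathrm{PGL}(2,q)$ are self-centralizing, hence admissible blocks a priori), and the paper cites Schmidt's (5.3.3) precisely to guarantee that the centralizer images are the standard components.

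The second gap is the reconstruction of $G'$, which the paper gets for free from Theorem \ref{AC-nonsol} but which you must prove. That $G'$ is perfect and that $|G'\cap Z(G)|=\gcd(2,q-1)$ are both asserted, not proved; ruling out $|G'\cap Z(G)|=1$ for odd $q$ genuinely requires the centralizer structure (a split copy of $\mathrm{PSL}(2,q)$ inside $G$ would give an involution $t$ with $C_G(t)\supseteq C_{G'}(t)\times Z(G)$ and $C_{G'}(t)$ dihedral, non-abelian for $q\ge 7$, while $q=5$ needs a separate count of centralizer orders), and nothing in your sketch supplies this. Worse, the fact you cite is false: the Schur multiplier of $\mathrm{PSL}(2,q)$ is not $\gcd(2,q-1)$ when $q=4$ (it is $\mathbb{Z}_2$) or $q=9$ (it is $\mathbb{Z}_6$). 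The case $q=9$ is exactly why Schmidt's classification contains the exceptional covering group $\mathcal{A}$ of $A_6$ of order $2160$ in Theorem \ref{AC-nonsol}(3)--(4), which the paper must exclude by a separate center-order argument. Your conclusion does survive at $q=9$, but only because $\mathbb{Z}_6$ has a unique subgroup of index $2$, so the unique perfect central extension of $\mathrm{PSL}(2,9)$ with kernel of order exactly $2$ is still $\mathrm{SL}(2,9)$; that repair must be made explicitly, since as written your final step fails at precisely the exceptional case the whole difficulty is about.
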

\begin{thm}\label{SL}
Let $G$ be a group such that $\mathcal{A}_G\cong
\mathcal{A}_{\mathrm{SL}(2,q)}$ for some prime power $q\geq 2$.
Then $G\cong \mathrm{SL}(2,q)$.
\end{thm}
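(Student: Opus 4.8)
The plan is to read off as much group structure as possible from the shape of $\mathcal{A}_{\mathrm{SL}(2,q)}$, and the decisive feature is that $\mathrm{SL}(2,q)$ is a \emph{CA-group}: the centralizer of every non-central element is abelian. I would check this from the standard description of centralizers in $\mathrm{SL}(2,q)$ --- a non-central semisimple element is centralized by a split or non-split maximal torus (cyclic, of order $q-1$ or $q+1$), and a non-central unipotent element by the product of a Sylow $p$-subgroup with the center (of order $2q$ for $q$ odd, $q$ for $q$ even) --- all of which are abelian. In any CA-group the relation ``commutes with'' is transitive on $G\setminus Z(G)$, so $\mathcal{A}_G$ is \emph{complete multipartite}, its parts being the sets $A\setminus Z(G)$ as $A$ runs over the maximal abelian subgroups. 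Counting conjugates of each torus and of the Sylow $p$-subgroups then gives the multiset of part sizes of $\mathcal{A}_{\mathrm{SL}(2,q)}$ explicitly; for $q$ odd it is $\frac{q(q+1)}{2}$ parts of size $q-3$, $\frac{q(q-1)}{2}$ of size $q-1$, and $q+1$ of size $2q-2$, with a parallel list for $q$ even.

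Being complete multipartite, and the associated multiset of part sizes, are isomorphism invariants of a graph, so the hypothesis forces $\mathcal{A}_G$ to be complete multipartite with exactly this multiset; conversely a complete multipartite non-commuting graph forces $G$ to be a CA-group. Hence the maximal abelian subgroups $A$ of $G$ biject with the parts and satisfy $|A|=(\text{part size})+|Z(G)|$. Since $Z(G)\le A\le G$, Lagrange yields two constraints: $|Z(G)|$ divides every part size, and every $|A|$ divides $|G|=|Z(G)|+(|\mathrm{SL}(2,q)|-|Z(\mathrm{SL}(2,q))|)$. For $q$ odd the part sizes have greatest common divisor $2$, so $|Z(G)|\in\{1,2\}$, and $|Z(G)|=1$ is impossible because the part of size $q-1$ would give a subgroup of order $q$ while $q\nmid q(q^2-1)-1$; thus $|Z(G)|=2$ and $|G|=|\mathrm{SL}(2,q)|$. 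For $q$ even the part sizes $q-1$ and $q$ are coprime, forcing $|Z(G)|=1$, and again $|G|=|\mathrm{SL}(2,q)|$.

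To identify $\bar{G}:=G/Z(G)$ I would use that in a CA-group distinct maximal abelian subgroups meet in exactly $Z(G)$ (sharing a non-central element would make them share its centralizer). Therefore the images $A/Z(G)$ intersect pairwise trivially and cover $\bar G\setminus\{1\}$, so $\bar G$ admits a nontrivial \emph{partition}. I would then invoke the Baer--Kegel--Suzuki classification of finite groups with a partition --- $p$-groups of restricted type, Frobenius groups, $\mathrm{Sz}(2^{2m+1})$, $\mathrm{PGL}(2,p^m)$ and $\mathrm{PSL}(2,p^m)$ --- and match $|\bar G|=|\mathrm{PSL}(2,q)|$ together with the orders and multiplicities of the partition components. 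The $p$-group and Frobenius cases fail on the number and orders of components, and the Suzuki and $\mathrm{PGL}$ cases fail on the exact order, leaving $\bar G\cong\mathrm{PSL}(2,q)$. For $q$ even this is $\mathrm{SL}(2,q)$ and, as $Z(G)=1$, we are done; for $q$ odd, $G$ is a central extension of $\mathrm{PSL}(2,q)$ by a group of order $2$, and of the two such extensions the split one $C_2\times\mathrm{PSL}(2,q)$ is excluded (for $q\ge 7$ it is not a CA-group, its involution centralizers being dihedral; the cases $q\le 5$ are settled directly by comparing part multisets), so $G$ is the non-split double cover $\mathrm{SL}(2,q)$.

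I expect the main obstacle to be the partition step: converting the abstract Baer--Kegel--Suzuki list into a clean contradiction for each non-$\mathrm{PSL}$ family requires matching both $|\bar G|$ and the full multiset of component orders, and the Frobenius and $\mathrm{PGL}(2,p^m)$ alternatives in particular need careful arithmetic. The exceptional parameters $q\in\{2,3\}$ (where the split-torus parts degenerate, e.g. for $q=3$ the split torus equals the center) and the coincidences among small $\mathrm{PSL}(2,\cdot)$ are a secondary nuisance; I would dispose of them by a direct inspection of the part multiset, which already distinguishes $\mathrm{SL}(2,q)$ from its competitors in these cases.
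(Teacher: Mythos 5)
Your route is genuinely different from the paper's---it replaces the paper's citations of \cite{AAM} (for $|G|=|\mathrm{SL}(2,q)|$ and the AC-property) and Schmidt's classification of finite AC-groups \cite{S} with a self-contained divisibility argument on part sizes plus the Baer--Kegel--Suzuki classification of partitioned groups---and much of it is sound: the complete-multipartite equivalence, the determination of $|Z(G)|$ and $|G|$, the partition of $G/Z(G)$ by images of maximal abelian subgroups, and the final dichotomy $C_2\times\mathrm{PSL}(2,q)$ versus $\mathrm{SL}(2,q)$ are all correct. But two steps have genuine gaps. First, the Baer--Kegel--Suzuki list you quote is incomplete: besides $p$-groups, Frobenius groups, $\mathrm{Sz}(2^{2m+1})$, $\mathrm{PGL}(2,r^m)$ and $\mathrm{PSL}(2,r^m)$, it contains $S_4$ (harmless, since $24\neq q(q^2-1)/2$ for $q\geq 5$) and the groups of Hughes--Thompson type, i.e.\ non-$p$-groups $\bar G$ whose Hughes subgroup $H_p(\bar G)$ is a proper (necessarily nilpotent, index-$p$) subgroup. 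This family is simply absent from your case analysis. It can be eliminated---every component not contained in $H_p(\bar G)$ is forced to have exponent $p$, hence $p$-power order, and counting the $|\bar G|(p-1)/p$ elements of order $p$ outside $H_p(\bar G)$ against your multiplicities $q+1$, $q(q\pm 1)/2$ gives a contradiction---but this argument, like the analogous nilpotent-kernel counting you defer in the Frobenius case, has to be supplied.

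Second, and more seriously, your claim that the $\mathrm{PGL}$ case ``fails on the exact order'' would not survive execution: ruling out $\bar G\cong\mathrm{PGL}(2,r^m)$ ($r$ odd, $r^m\geq 5$) by order alone amounts to proving that $r^m(r^{2m}-1)=q(q^2-1)/2$ has no solutions in odd prime powers, a cubic Diophantine assertion with no evident elementary proof (contrast the PSL case, where $r^m(r^{2m}-1)/2=q(q^2-1)/2$ forces $r^m=q$ by strict monotonicity). The step that actually works in your framework is structural, not arithmetic: in any partition of $\mathrm{PGL}(2,r^m)$ by abelian subgroups, a generator $t$ of a cyclic maximal torus has its component contained in $C(t)=\langle t\rangle$ and containing $t$, so the full tori of orders $r^m-1$ and $r^m+1$ must themselves be components; hence both $r^m-1$ and $r^m+1$ must occur among your component orders $\{(q-1)/2,(q+1)/2,q\}$, which is impossible because no two of these numbers differ by $2$ when $q\geq 7$, while $q=5$ would force $r^m=4$, excluded since $r$ is odd. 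This ``the partition must be the standard one'' fact is exactly what the paper imports from Schmidt's (5.3.3) and Huppert's description of the partitions of $\mathrm{PSL}$ and $\mathrm{PGL}$; without it (or the substitute just sketched), your matching step does not close.
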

  For any prime power $q$, we
denote by $\mathrm{PGL}(2,q)$ (resp. $\mathrm{PSL}(2,q)$) the
projective general (resp. special) linear group of degree 2 over
the finite field of order $q$.
\section{\bf Proofs}
Here for convenience, we remind some of the properties of
non-commuting graphs and common properties of groups with
isomorphic non-commuting graphs.\\
 Let $G$ and $H$ be two
non-abelian groups such that $\mathcal{A}_G\cong \mathcal{A}_H$.
By Lemma 3.1 of \cite{AAM}, if one of $G$ or $H$ is finite, then
so is the other. The order of $\mathcal{A}_G$ is $|G|-|Z(G)|$ and
so $|G|-|Z(G)|=|H|-|Z(H)|$. The degree of a vertex $x$ in
$\mathcal{A}_G$ is equal to $|G|-|C_G(x)|$. Thus the multisets
 of  degrees of vertices of two graphs
$\mathcal{A}_G$ and $\mathcal{A}_H$ are the same.\\
A non-abelian group $G$ is called an $AC$-group, if the
centralizer  $C_G(x)$ of every non-central element $x$ of $G$ is
abelian. \\
Recall that a non-empty subset $X$ of the vertices of a simple
graph $\Gamma$ is called independent if every two distinct
vertices of $X$ are not joint by an edge in $\Gamma$. Thus an
independent set $S$ of  the non-commuting graph of a group is a
set of pairwise commuting non-central elements of the group.
\begin{lem}\label{lemm}
Let $G$ and $H$ be  two finite non-abelian group with
$\mathcal{A}_G\cong \mathcal{A}_H$. \\
{\rm (1)} \; If $|G|=|H|$, then the multisets (sets with
multiplicities) $\{|C_G(g)|\;:\; g\in G \backslash Z(G)\}$ and
$\{|C_H(h)| \;:\; h\in H \backslash
Z(H)\}$ are equal.\\
{\rm (2)} \; If $G$ is an $AC$-group, then $H$ is also an
$AC$-group.
\end{lem}
\begin{proof}
(1) \; It is straightforward, if we note that the set of
non-adjacent vertices to a vertex $x$   in the non-commuting graph
$H$ is $C_H(x)\backslash Z(H)$, and note that from $|G|=|H|$ we
also have $|Z(G)|=|Z(H)|$, since
$|H|-|Z(H)|=|G|-|Z(G)|$.\\
 (2) \; Note that a subgroup $S$ of a non-abelian group $K$
is abelian if and only if either $S\backslash Z(S)$ is empty or
$S\backslash Z(S)$ is an independent set in the non-commuting
graph $\mathcal{A}_K$.  Let $\phi$ be a graph isomorphism from
$\mathcal{A}_H$ onto $\mathcal{A}_G$. Then it is easy to see that
for each $h\in H\backslash Z(H)$,
$$C_H(h)\backslash Z(H)=\phi^{-1}\big(C_G(\phi(h))\backslash Z(G)\big).  \eqno{(*)}$$
Now since  $G$ is an $AC$-group,  $C_G(g)$ is abelian for all
$g\in G\backslash Z(G)$ and so it follows from $(*)$ and the
remark above that $C_H(h)$ is abelian. Hence $H$ is also an
$AC$-group.
\end{proof}
  Finite non-nilpotent $AC$-groups were completely
characterized by Schmidt \cite{S}.
 We use the following results in our proofs.
\begin{thm}{\rm (\cite[Satz 5.9.]{S})} \label{AC-nonsol}
Let $G$ be a finite non-solvable group. Then $G$ is an $AC$-group
if and only if $G$ satisfies one of the following conditions:
\begin{enumerate}
\item $G/Z(G)\cong \mathrm{PSL}(2,p^n)$ and $G'\cong
\mathrm{SL}(2,p^n)$, where $p$ is a prime and $p^n>3$.
\item  $G/Z(G)\cong\mathrm{PGL}(2,p^n)$ and $G'\cong
\mathrm{SL}(2,p^n)$, where $p$ is a prime and $p^n>3$.
\item $G/Z(G)\cong \mathrm{PSL}(2,9)$   and $G'$
 is a covering group of $A_6$. In particular, $G'$ is isomorphic to
\begin{align*} \mathcal{A}\cong<c_1,c_2,c_3,c_4, k \;|\;
c_1^3=c_2^2=c_3^2=c_4^2=(c_1c_2)^3=(c_1c_3)^2=&\\
=(c_2c_3)^3=(c_3c_4)^3=k^3, (c_1c_4)^2=k,&\\  c_2c_4=k^3c_4c_2,
kc_i=c_ik (i=1, \dots, 4), k^6=1>. &
\end{align*}
\item $G/Z(G)\cong \mathrm{PGL}(2,9)$   and $G'\cong\mathcal{A}$.
\end{enumerate}
\end{thm}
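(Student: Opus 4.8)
The statement is a biconditional, so the plan is to prove the two directions separately; the forward implication (every non-solvable $AC$-group falls into one of the four families) is the substantial one, and I would organize it around the theory of group partitions. First I would record the local structure of an $AC$-group: if $x \notin Z(G)$ and $A$ is any maximal abelian subgroup containing $x$, then $A \subseteq C_G(x)$, and since $C_G(x)$ is abelian by hypothesis, maximality gives $A = C_G(x)$; moreover if $x$ and $y$ commute then $\langle x,y\rangle$ is abelian and extends to a maximal abelian subgroup, which forces $y \in C_G(x)$. Hence $C_G(x)$ is the unique maximal abelian subgroup through $x$, and any two distinct maximal abelian subgroups meet in exactly $Z(G)$. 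Passing to $\bar G := G/Z(G)$, the images $\bar A_i := A_i/Z(G)$ of the maximal abelian subgroups are abelian and form a nontrivial partition of $\bar G$. Since $G$ is non-solvable, so is $\bar G$.

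Next I would invoke the Baer--Kegel--Suzuki classification of finite groups admitting a nontrivial partition: the non-solvable such groups are $\mathrm{PSL}(2,q)$, $\mathrm{PGL}(2,q)$ and the Suzuki groups $\mathrm{Sz}(q)$. The Suzuki groups must be eliminated: a Sylow $2$-subgroup $Q$ of $\mathrm{Sz}(q)$ is non-abelian and occurs (up to conjugacy) as a component of every partition of $\mathrm{Sz}(q)$, so $\mathrm{Sz}(q)$ has no partition into abelian subgroups and cannot equal $\bar G$. This leaves $\bar G \cong \mathrm{PSL}(2,p^n)$ or $\mathrm{PGL}(2,p^n)$ with $p^n > 3$, giving the two possibilities for $G/Z(G)$ in cases (1)--(4).

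It then remains to pin down $G'$. Since $\mathrm{PSL}(2,p^n)$ is perfect, $\bar G' = \mathrm{PSL}(2,p^n)$ in both cases, and as $G'$ maps onto $\bar G'$ with central kernel, $G'$ is a perfect central extension of $\mathrm{PSL}(2,p^n)$, hence a quotient of its Schur cover. For $p^n \neq 9$ the Schur multiplier is $\mathbb{Z}_{\gcd(2,p^n-1)}$ and the universal cover is $\mathrm{SL}(2,p^n)$, so $G'$ is either $\mathrm{SL}(2,p^n)$ or $\mathrm{PSL}(2,p^n)$; the $AC$-hypothesis rules out the latter when $p$ is odd, because an involution of $\mathrm{PSL}(2,p^n)$ has dihedral (non-abelian) centralizer, whereas the element of order $4$ lying over it in $\mathrm{SL}(2,p^n)$ has cyclic centralizer (a maximal torus). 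Thus $G' \cong \mathrm{SL}(2,p^n)$, yielding cases (1) and (2). The exceptional prime power $p^n = 9$, where $\mathrm{PSL}(2,9)\cong A_6$ carries the larger Schur multiplier $\mathbb{Z}_6$, is the one place where a different perfect central extension survives; here the same centralizer bookkeeping singles out the covering group $\mathcal{A}$ of $A_6$ displayed in the statement, producing cases (3) and (4). This Schur-multiplier analysis, and in particular the correct identification of $\mathcal{A}$ in the $A_6$ case, is where I expect the real work to lie.

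Finally, for the converse I would verify directly that each listed family is an $AC$-group. In the prototypes this is a centralizer computation: a non-central element of $\mathrm{SL}(2,p^n)$ (or of $\mathrm{GL}(2,p^n)$ in the $\mathrm{PGL}$ cases) is either regular semisimple, with centralizer a maximal torus, or a scalar times a unipotent, with centralizer a scalar-by-unipotent group, and in every case this is abelian; the group $\mathcal{A}$ is handled the same way. To pass from the prototypes to a general $G$ with the prescribed $G/Z(G)$ and $G'$, I would use that perfectness of $\mathrm{PSL}(2,p^n)$ gives $G = G'\,Z(G)$ in the $\mathrm{PSL}$ cases and $[G:G'\,Z(G)] = 2$ in the $\mathrm{PGL}$ cases; writing a non-central element as $g'z$ with $g' \in G'$ and $z \in Z(G)$, one has $C_G(g'z) = C_{G'}(g')\,Z(G)$, which is abelian because $G'$ is already known to be an $AC$-group. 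Hence $G$ is an $AC$-group, completing the biconditional.
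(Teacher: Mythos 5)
First, a point of orientation: the paper does not prove this statement at all --- it is imported verbatim from Schmidt \cite{S} (Satz 5.9), so there is no in-paper proof to compare against. Measured against Schmidt's actual method, your skeleton is the right one: in an $AC$-group the centralizers of non-central elements are exactly the maximal abelian subgroups, two distinct ones meet in $Z(G)$, and their images form a partition of $G/Z(G)$ --- indeed the paper later quotes precisely this fact ((5.3.3) on p.~112 of \cite{S}) --- after which one classifies partitioned groups and does covering-group bookkeeping. But your sketch has several genuine gaps.

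The most serious is the misquoted partition classification. The Baer--Kegel--Suzuki theorem lists, besides $p$-groups, $S_4$, Hughes--Thompson type groups, $\mathrm{PSL}(2,q)$, $\mathrm{PGL}(2,q)$ and $\mathrm{Sz}(q)$, also the \emph{Frobenius groups}, and these can be non-solvable: a Frobenius complement may contain $\mathrm{SL}(2,5)$, e.g.\ the Frobenius group $\mathbb{F}_{11}^{2}\rtimes \mathrm{SL}(2,5)$ is non-solvable and partitioned. Your list ``$\mathrm{PSL}$, $\mathrm{PGL}$, $\mathrm{Sz}$'' silently drops this family, so your forward direction as written does not go through; one must separately exclude a Frobenius quotient, e.g.\ by showing that an abelian partition of $G/Z(G)$ induces an abelian partition of the complement, which is impossible when it contains $\mathrm{SL}(2,5)$ (the unique involution of $\mathrm{SL}(2,5)$ forces the whole subgroup into one component). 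Second, the Schur-multiplier bookkeeping is wrong at small $q$: the multiplier of $\mathrm{PSL}(2,4)\cong A_5$ is $\mathbb{Z}_2$, not $\mathbb{Z}_{\gcd(2,3)}=1$, with universal cover $\mathrm{SL}(2,5)$; and your argument ruling out $G'\cong \mathrm{PSL}(2,p^n)$ for odd $p$ fails at $p^n=5$, where the centralizer of an involution in $A_5$ is a Klein four group, hence abelian --- $A_5$ \emph{is} an $AC$-group, and the theorem accommodates it only via the exceptional isomorphism $\mathrm{PSL}(2,5)\cong \mathrm{PSL}(2,4)=\mathrm{SL}(2,4)$, i.e.\ as case (1) with $p^n=4$. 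A proof that treats only $p^n=9$ as exceptional proves something false. (Relatedly, in the $\mathrm{PGL}$ case the perfectness of $G'$ --- needed to invoke the Schur cover at all --- is asserted, not proved: a priori one only gets $G'=KN$ with $K$ the perfect core and $N\leq G'\cap Z(G)$ central; and the identification of the specific cover $\mathcal{A}$ of $A_6$ is deferred to unspecified ``bookkeeping,'' which you yourself flag as the real work.)

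The converse also has a hole in the $\mathrm{PGL}$ cases (2) and (4): your formula $C_G(g'z)=C_{G'}(g')Z(G)$ only covers elements of $G'Z(G)$, which has index $2$, so half the non-central elements are untouched. This is not cosmetic: $\mathrm{PGL}(2,q)$ for odd $q\geq 5$ is itself \emph{not} an $AC$-group --- in $\mathrm{PGL}(2,5)\cong S_5$ a transposition has centralizer of order $12$, which is non-abelian --- even though $\mathrm{GL}(2,q)$ is one. So the abelianness of the centralizers of elements lying over $\mathrm{PGL}\setminus \mathrm{PSL}$ genuinely depends on the central extension and must be computed (such a centralizer meets $G'Z(G)$ in a torus-like abelian piece rather than a dihedral one); it cannot be inherited from the $AC$-property of $G'$ as your reduction suggests.
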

For a finite simple graph $\Gamma$, we denote by $\omega(\Gamma)$
the maximum size of a complete subgraph of $\Gamma$. So
$\omega(\mathcal{A}_G)$ is the maximum number of pairwise
non-commuting elements in a finite non-abelian group $G$.
\begin{thm}{\rm  (Satz 5.12 of \cite{S})}\label{AC-sol}
Let $G$ be a finite non-abelian solvable group.  Then $G$ is an
$AC$-group if and only if $G$ satisfies one of the following
properties:
\begin{enumerate}
\item $G$ is non-nilpotent and it has an abelian normal subgroup $N$ of prime index and
$\omega(\mathcal{A}_G)=|N:Z(G)|+1$.
\item $G/Z(G)$ is a Frobenius group with Frobenius kernel and
complement $F/Z(G)$ and $K/Z(G)$, respectively  and $F$ and $K$
are abelian subgroups of $G$; and
$\omega(\mathcal{A}_G)=|F:Z(G)|+1$.
\item  $G/Z(G)$ is a Frobenius group with Frobenius kernel and
complement $F/Z(G)$ and $K/Z(G)$, respectively; and  $K$ is an
abelian subgroup of $G$, $Z(F)=Z(G)$, and $F/Z(G)$ is of prime
power order; and
$\omega(\mathcal{A}_G)=|F:Z(G)|+\omega(\mathcal{A}_F)$.
\item $G/Z(G)\cong S_4$ and $V$ is a non-abelian subgroup of $G$
such that $V/Z(G)$ is the Klein $4$-group of $G/Z(G)$; and
$\omega(\mathcal{A}_G)=13$.
\item $G=A \times P$, where $A$ is an abelian subgroup and $P$ is an
$AC$-subgroup of prime power order.
\end{enumerate}
\end{thm}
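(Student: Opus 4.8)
I will prove the nontrivial (``only if'') implication, the converse being a direct verification that each listed group is a non-abelian solvable $AC$-group realizing the stated value of $\omega(\mathcal{A}_G)$; so let $G$ be a finite non-abelian solvable $AC$-group. The engine of the whole argument is the partition structure forced by the $AC$-condition. For non-central $x$ the subgroup $C_G(x)$ is abelian, and it is the \emph{unique} maximal abelian subgroup containing $x$, since any abelian subgroup through $x$ lies inside $C_G(x)$. Consequently distinct maximal abelian subgroups $A\neq B$ meet in $Z(G)$ (a shared non-central element $x$ would give $A=C_G(x)=B$), and as every element lies in some $C_G(x)$ the maximal abelian subgroups $A_1,\dots,A_k$ cover $G$. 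Passing to $\bar G:=G/Z(G)$, the images $\bar A_i:=A_i/Z(G)$ form a genuine partition of $\bar G$: each non-identity element lies in exactly one $\bar A_i$, and each $\bar A_i$ is a proper nontrivial abelian subgroup. The decisive bookkeeping observation is that two elements lying in \emph{different} $A_i$ never commute in $G$ (if $a\in A_i$ and $b\in A_j$ commute then $b\in C_G(a)=A_i$, forcing $b\in A_i\cap A_j=Z(G)$); hence a maximum set of pairwise non-commuting elements takes exactly one vertex from each $A_i$, and $\omega(\mathcal{A}_G)=k$ equals the number of maximal abelian subgroups. Every clique number displayed below is just this count.

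The first dichotomy is nilpotency. Suppose $G$ is nilpotent, so $G$ is the direct product of its Sylow subgroups. If two of these, $P_1$ and $P_2$, were non-abelian, choosing $x_i\in P_i\setminus Z(P_i)$ would give $C_G(x_1x_2)=C_{P_1}(x_1)\times C_{P_2}(x_2)\times\cdots$, a non-abelian centralizer, contradicting the $AC$-property; so at most one Sylow subgroup is non-abelian. Writing $P$ for that (necessarily non-abelian) Sylow $p$-subgroup and $A$ for the product of the remaining, abelian, Sylow subgroups yields $G=A\times P$ with $P$ a prime-power $AC$-group, which is exactly conclusion (5). Since $G/Z(G)$ being a $p$-group would itself force $G$ nilpotent, in all remaining cases $G$ is non-nilpotent and $\bar G$ is a non-nilpotent solvable group carrying a nontrivial partition into abelian subgroups.

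For the non-nilpotent case I would invoke the classification of finite groups admitting a nontrivial partition (Baer, Kegel, Suzuki): among solvable groups, the only ones that are not $p$-groups are Frobenius groups and $S_4$. If $\bar G\cong S_4$ we are in the exceptional configuration (4); here one checks by hand that the preimage $V$ of the normal Klein four-subgroup is non-abelian and that counting maximal abelian subgroups gives $\omega(\mathcal{A}_G)=13$. Otherwise $\bar G=\bar F\rtimes\bar K$ is Frobenius with kernel $\bar F$ and complement $\bar K$. Each part is the image of an abelian $A_i$, so the preimage of any part is abelian; as $\bar K$ is a part, the complement preimage $K$ is abelian, and the $|\bar F|=|F:Z(G)|$ conjugates of $\bar K$ account for that many parts, the rest lying inside $\bar F$. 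Now split on whether the kernel preimage $F$ is abelian. If $F$ is abelian it is itself a single part, so $k=|F:Z(G)|+1$, giving (2); the further specialization in which the complement has prime order (equivalently $F$ has prime index in $G$) is recorded separately as (1), with $N=F$ and $\omega(\mathcal{A}_G)=|N:Z(G)|+1$. If instead $F$ is non-abelian, then $F$ is a non-abelian nilpotent $AC$-group on which the complement acts fixed-point-freely; this forces $Z(F)=Z(G)$, and combined with the direct-product description of nilpotent $AC$-groups (whose abelian factor is now central) it forces $F/Z(G)$ to be a $p$-group. The parts inside $\bar F$ then number $\omega(\mathcal{A}_F)$, so $\omega(\mathcal{A}_G)=|F:Z(G)|+\omega(\mathcal{A}_F)$, which is exactly (3).

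The main obstacle is the passage through the non-nilpotent case: one must either cite, or reprove without circularity, that a solvable group with a nontrivial abelian partition is Frobenius or $S_4$, and then translate that partition of $\bar G$ faithfully back to the subgroup lattice of $G$ above $Z(G)$. The genuinely delicate points are verifying $Z(F)=Z(G)$ and the prime-power order of $F/Z(G)$ in case (3), isolating the $S_4$-configuration (4) where no reduction to a Frobenius partition is available, and pinning down the overlap between (1) and (2). By contrast, once the partition is in place the clique-number identities are automatic from the observation that $\omega(\mathcal{A}_G)$ simply counts maximal abelian subgroups, so the arithmetic in each case is mechanical rather than substantive.
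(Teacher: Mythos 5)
First, a point of reference: the paper does not prove this statement at all --- it is quoted verbatim as Satz 5.12 of Schmidt \cite{S} --- so your attempt can only be measured against the correctness of its own steps, and there is a genuine gap at the decisive one. Your reduction to a partition is fine: in an $AC$-group the maximal abelian subgroups are exactly the centralizers of non-central elements, they pairwise meet in $Z(G)$, their images partition $G/Z(G)$, and $\omega(\mathcal{A}_G)$ counts the parts. But your citation of the Baer--Kegel--Suzuki classification is wrong: among solvable groups admitting a nontrivial partition, the non-$p$-groups are \emph{not} only Frobenius groups and $S_4$; the classification also contains the groups of Hughes--Thompson type (those with Hughes subgroup $H_p(G)\neq G$ of index $p$), and these really occur here. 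Concretely, let $G=(C_9\times C_7)\rtimes C_3$, where a generator $t$ of $C_3$ acts on $C_9=\langle a\rangle$ by $a\mapsto a^4$ and fixed-point-freely on $C_7$. One checks directly that $G$ is a non-nilpotent solvable $AC$-group (it satisfies Schmidt's case (1) with $N=C_9\times C_7$), that $Z(G)=\langle a^3\rangle$, and that $\bar G=G/Z(G)$ has the nontrivial center $\langle \bar a\rangle$, so $\bar G$ is neither a Frobenius group, nor $S_4$, nor a $p$-group --- it is of Hughes--Thompson type, with abelian partition $\{C_3\times C_7\}\cup\{\text{subgroups of order }3\}$. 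This example simultaneously refutes your claim that case (1) is merely ``the specialization of (2) in which the complement has prime order'': Schmidt's case (1) is a genuinely separate branch, precisely because $\bar G$ need not be Frobenius there, and your trichotomy silently discards it.

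Two secondary defects. In the nilpotent case your witness fails as stated: if $x_i\in P_i\setminus Z(P_i)$ then $C_G(x_1x_2)=C_{P_1}(x_1)\times C_{P_2}(x_2)\times\cdots$ is \emph{abelian} whenever both $P_i$ are themselves $AC$-groups, so no contradiction follows from that element (e.g.\ $Q_8\times E$ with $E$ extraspecial of order $27$); the correct witness is $x_2$ alone, whose centralizer contains all of the non-abelian $P_1$. And in the Frobenius case you assert ``as $\bar K$ is a part'' that the complement preimage $K$ is abelian, but the abelian partition by the $\bar A_i$ need not coincide with the Frobenius partition: a priori $\bar K$ could split into several abelian parts. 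This is ruled out by an argument you omit --- distinct parts of a partition of a Frobenius complement would have coprime orders (Sylow subgroups cyclic or quaternion force a unique subgroup of each prime order), and then $\sum_i(m_i-1)=|\bar K|-1$ with $\prod_i m_i\le|\bar K|$ forces a single part --- easy, but it is a needed step, not a triviality. The clique-counting mechanism and the derivation of $Z(F)=Z(G)$ and prime-power $|F/Z(G)|$ in case (3) are in order, but without repairing the partition classification the proof does not establish the theorem.
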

\noindent{\bf Proof of Theorems \ref{GL} and \ref{SL}.} Let
$q_1=p_1^{n_1}>3$ and $q_2=p_2^{n_2}\geq 2$, where $p_1$ and $p_2$
are  two prime numbers. Let  $M_1=\mathrm{GL}(2,q_1)$ and
$M_2=\mathrm{SL}(2,q_2)$ and suppose that $G_1$ and $G_2$ are two
groups such that
$\mathcal{A}_{G_i}\cong \mathcal{A}_{M_i}$ for $i=1,2$.\\
If $q_1=2$, then $M_2\cong S_3$ is the symmetric group of degree
$3$ and so by Proposition 3.2 of \cite{AAM}, $G_2\cong M_2$. If
$q_2=3$, then $M_2$ is a group of order $24$ and its center has
order $2$.  As there is some
 element $g$ with $|C_{G_2}(g)|=6$, we see that there is no normal Sylow
 $3$-subgroup in
$G_2$. Hence $G_2/Z(G_2) \cong A_4$. So either $G_2 \cong M_2$ or
$\mathbb{Z}_2  \times A_4$. But as there are elements $h \in G_2$
with $|C_{G_2}(h)| = 4$, we have  $G_2 \cong M_2$.\\
Now let $q_2>3$. If $q_2$ is even, then $\mathrm{PSL}(2,q_2)\cong
M_2$ and so
$\mathcal{A}_{G_2}\cong\mathcal{A}_{\mathrm{PSL}(2,q_2)}$. Then by
Corollary 5.3 of \cite{AAM}, $G_2\cong \mathrm{PSL}(2,q_2)\cong
M_2$. Therefore we may assume that $q_2\geq 5$ is odd.\\
By Proposition 4.3 of \cite{AAM}, $|G_i|=|M_i|$  for $i=1,2$. By
Lemma 3.5 of \cite{AAM}, $M_i$'s are  $AC$-groups and so by Lemma
\ref{lemm}(2)   $G_i$'s are also $AC$-groups. Now since
$\mathcal{A}_{G_i}\cong \mathcal{A}_{M_i}$ and $|G_i|=|M_i|$, by
Lemma \ref{lemm} we have the following equality between  multisets
$$W_i=\{|C_{G_i}(x)| \;|\; x\in G_i\backslash Z(G_i)\}=\{|C_{M_i}(g)| \;|\; g\in
M_i\backslash Z(M_i)\}, \;\; i=1,2.$$ Also, since the order of two
graphs $\mathcal{A}_{G_i}$ and $\mathcal{A}_{M_i}$ are the same,
we have that $|G_i|-|Z(G_i)|=|M_i|-|Z(M_i)|$ and so
$|Z(G_i)|=|Z(M_i)|$ ($i=1,2$). Therefore, it follows from
Propositions 3.14 and 3.26 of \cite{AAM} that the multiset $W_1$
(resp., $W_2$) consists of three distinct integers $(q_1-1)^2$,
(resp., $(q_2-1)/2$) $q_1^2-1$ (resp., $(q_2+1)/2$) and
$q_1(q_1-1)$ (resp., $q_2$) with multiplicities
$\frac{q_i(q_i+1)}{2}$, $\frac{q_i(q_i-1)}{2}$
and $q_i+1$, respectively.\\

We claim that both groups $G_1$ and $G_2$ are not nilpotent.
Suppose, for a contradiction, that $G_i$ is nilpotent, then so is
$G_i/Z(G_i)$. Therefore $G_i/Z(G_i)$ has only one Sylow
$p_i$-subgroup. Since $W_1$ (resp., $W_2$) contains $q_i+1$
elements all equal to $q_1(q_1-1)$ (resp., $q_2$), there exist
two non-central elements $x_1$ and $y_1$ in $G_1$ (resp., $x_2$
and $y_2$ in $G_2$) such that $C_{G_1}(x_1)\not=C_{G_1}(y_1)$ and
$|C_{G_1}(x_1)|=|C_{G_1}(y_1)|=q_1(q_1-1)$ (resp.,
$C_{G_2}(x_2)\not=C_{G_2}(y_2)$ and
$|C_{G_2}(x_2)|=|C_{G_2}(y_2)|=2q_2$). Since $C_{G_i}(x_i)/Z(G_i)$
and $C_{G_i}(y_i)/Z(G_i)$ are of the same order $q_i$, they are
Sylow $p_i$-subgroups of $G_i/Z(G_i)$. It follows that
$C_{G_i}(x_i)/Z(G_i)=C_{G_i}(y_i)/Z(G_i)$ and so
$C_{G_i}(x_i)=C_{G_i}(y_i)$, a contradiction. \\

Now we prove that both $G_1$ and $G_2$ cannot be solvable.
Suppose, for a contradiction, that $G_i$'s are solvable. Then
since $G_i$ are not nilpotent, it follows from  Theorem
\ref{AC-sol} that $G_i$'s satisfy one of properties (1)-(4) in
Theorem \ref{AC-sol}. Since $q_i>3$ is a prime power and $q_2$ is
odd, both of $|G_1/Z(G_1)|=q_1(q_1^2-1)$ and
$|G_2/Z(G_2)|=\frac{q_2(q_2^2-1)}{2}$ cannot equal to $|S_4|=24$.
Therefore $G_i$'s do not satisfy $(4)$. If $G_i$ satisfies either
(1) or (2), then $W_i$ contains only two distinct elements, since
in the case (1), if $x\in N\backslash Z(G_i)$, then
$C_{G_i}(x)=N$; and if $x\in G\backslash N$ then $C_N(x)=Z(G_i)$;
so $|C_{G_i}(x)|\in\big\{|G_i:N||Z(G_i)|,|N|\big\}$ for every
non-central element $x\in G_i$, and in the case (3),
$|C_{G_i}(x)|\in\big\{|K|,|F|\big\}$. This is not possible, since
$W_i$ has exactly three distinct elements.\\
Finally, suppose that $G_i$ satisfies (3). Note that
$C_{G_i}(x)=C_F(x)$ for every non-central element $x\in F$ and
$C_{G_i}(x)$ is equal to the conjugate of $K$ which contains the
non-central element $x$. It follows that the three  distinct
elements of the multiset $W_i'=\{w/|Z(G)| \;|\; w\in W_i\}$ are
$|K/Z(G_i)|, r^k,r^\ell$, where $|F/Z(G)|=r^m$ and  $r$ is a prime
number. This is impossible, since no two of the numbers $q_1$,
$q_1+1$
or $q_1-1$ (resp., $q_2$, $(q_2+1)/2$ or $(q_2-1)/2$) can simultaneously be powers of the same prime.\\

Hence $G_i$'s are  finite non-solvable $AC$-groups. By Theorem
\ref{AC-nonsol}, $G_i$'s satisfy one of the conditions (1)-(4)
stated in Theorem \ref{AC-nonsol}.
 If $G_i$ satisfies (3), then  as $A_6$ has self-centralizing elements of order 4 and
5, $G_i$ contains two elements $x_i,y_i$ such that
$|\frac{C_{G_i}(x_i)}{Z(G_i)}|=4$ and
$|\frac{C_{G_i}(y_i)}{Z(G_i)}|=5$. This implies that
$q_1\in\{4,5\}$ and $q_2=9$. Therefore $|G_1/Z(G_1)|=4\cdot
(4^2-1)$ or $5\cdot (5^2-1)$, which is impossible, since
$|G_1/Z(G_1)|=|\mathrm{PSL}(2,9)|=\frac{9\cdot(9^2-1)}{2}$. Since
$M_2=\mathrm{SL}(2,9)$, $|Z(M_2)|=2$. But $3$ divides $Z(G_2)$ by
Theorem \ref{AC-nonsol}, a contradiction.\\
 If $G_i$ satisfies (4), then  as $\mathrm{PGL}(2,9)$ contains self-centralizing
elements of order 8 and 10, $G_i$ contains two elements $t_i$ and
$s_i$ such that $|\frac{C_{G_i}(t_i)}{Z(G_i)}|=8$ and
$|\frac{C_{G_i}(s_i)}{Z(G_i)}|=10$. It follows that
$\{8,10\}\subset \{q_2,\frac{q_2-1}{2},\frac{q_2-1}{2}\}$, which
is a contradiction as $q_2$ is a prime power; and for $i=1$,  it
follows that $q_1=9$. Hence $|Z(M_1)| = 8$. But 3 divides
$|Z(G_1)|$, a contradiction.
Thus $G_i$ does not satisfy both (3) and (4). \\
Now suppose that $G_i$ satisfies either (1) or (2). The group
$\mathrm{PGL}(2,r^m)$ (resp. $\mathrm{PSL}(2,r^m)$) has a
partition $\mathcal{P}$ consisting of $r^m+1$ Sylow $r$-subgroups,
$\frac{(r^m+1)r^m}{2}$ cyclic subgroups of order $r^m-1$ (resp.
$\frac{r^m-1}{\gcd(2,r^m-1)}$) and $\frac{(r^m-1)r^{m}}{2}$
cyclic subgroups of order $r^m+1$ (resp.
$\frac{r^m+1}{\gcd(2,r^m-1)}$) (see pp. 185-187 and p.193 of
\cite{H}). Now \cite[(5.3.3) in p.112]{S} states that if $x\in
G_i\backslash Z(G_i)$, then $C_{G_i}(x_i)/Z(G_i)$ belongs to
$\mathcal{P}$. Suppose that $G_i/Z(G_i)\cong \mathrm{PGL}(2,r^m)$
(resp. $\mathrm{PSL}(2,r^m)$). Thus there exist elements
$g_{i1},g_{i2},g_{i3}\in G_i\backslash Z(G_i)$ such that
$|C_{G_i}(g_{i1})|/|Z(G_i)|=r^m$,
$|C_{G_i}(g_{i2})|/|Z(G_i)|=r^m-1$ (resp.
$\frac{r^m-1}{\gcd(2,r^m-1)}$), $|C_{G_i}(g_{i3})|/|Z(G_i)|=r^m+1$
(resp. $\frac{r^m+1}{\gcd(2,r^m-1)}$).\\
 Therefore, if $G_i/Z(G_i)\cong \mathrm{PGL}(2,r^m)$ (resp. $\mathrm{PSL}(2,r^m)$),
 then $\{q_1-1,q_1,q_1+1\}=\{r^m-1,r^m,r^m+1\}$ (resp.
 $\{\frac{r^m-1}{\gcd(2,r^m-1)},r^m,\frac{r^m+1}{\gcd(2,r^m-1)}\}$ and $\{\frac{q_2-1}{2},\frac{q_2+1}{2},q_2\}=\{r^m-1,r^m,r^m+1\}$ (resp.
 $\{\frac{r^m-1}{\gcd(2,r^m-1)},r^m,\frac{r^m+1}{\gcd(2,r^m-1)}\}$).\\
 It follows that, if $G_2/Z(G_2)\cong \mathrm{PGL}(2,r^m)$
then $q_2=r^m+1$, $\frac{q_2+1}{2}=r^m$ and
$\frac{q_2-1}{2}=r^m-1$. Since $q_2\geq 5$, we have a
contradiction as $3\leq q_2-\frac{q_2-1}{2}=r^m+1-r^m+1=2$. Hence
$G_2/Z(G_2)\cong \mathrm{PSL}(2,r^m)$, $G_2'\cong
\mathrm{SL}(2,r^m)$ and $r^m=q_2$.  Now since
$|G_2'|=|G_2|=|M_2|$, we have that $G_2\cong
M_2=\mathrm{SL}(2,q_2)$. This completes the proof of Theorem
\ref{SL}.\\
Now if  $G_1/Z(G_1)\cong \mathrm{PGL}(2,r^m)$ (resp.
$\mathrm{PSL}(2,r^m)$), it follows that
 $q_1=r^m$ (resp. $q_1=2^m$). Since $\mathrm{PSL}(2,2^m)\cong
 \mathrm{PGL}(2,2^m)$, we have if $G_1$ satisfies either (1)  or
 (2), then $G_1/Z(G_1)\cong \mathrm{PGL}(2,q_1)$ and $G_1'\cong
 \mathrm{SL}(2,q_1)$.\\
 Therefore $G_1$ is a group satisfying the following conditions:
$$G_1/Z(G_1)\cong \mathrm{PGL}(2,q_1) \;\; (\bullet), \;\;  G_1'\cong
\mathrm{SL}(2,q_1)\;\;   \;\;\text{and}\;\; |Z(G_1)|=q_1-1.$$
 If $q_1=2^m$ for some integer $m>1$, then
$\mathrm{SL}(2,q_1)\cong \mathrm{PGL}(2,q_1)\cong
\mathrm{PSL}(2,q_1)$. Thus as $\mathrm{PSL}(2,q_1)$ is a
non-abelian simple group, it follows from $(\bullet)$ that
$G_1=G_1'Z(G_1)$; and since $G_1'$ is also non-abelian simple,
$G_1'\cap Z(G_1)=1$. Therefore $G_1=G_1' \times Z(G_1)$.
   This completes the proof of Theorem \ref{GL}. $\hfill \Box$\\

\noindent{\bf Acknowledgments.} This work was done during author's
sabbatical leave study  in Summer 2007 at ICTP, Trieste, Italy.
He is  grateful to University of Isfahan for its financial
support as well as  ICTP for their warm hospitality. He  was also
supported by the Center of Excellence for Mathematics, University
of Isfahan. The author is indebted to the referee for his/her
careful reading, valuable comments and pointing out a serious
error in the previous version of Theorem \ref{GL}.

\end{document}